\newcommand{\Z}{\mathbb{Z}}
\newcommand{\Q}{\mathbb{Q}}
\newcommand{\R}{\mathbb{R}}
\newcommand{\FF}{\mathcal{F}}
\newcommand{\DualizingComp}{D\underline{\Q}}
\newcommand{\EquivEulerChar}[1][n]{\chi^{\mathfrak{S}_{#1}}}
\newcommand{\support}{\textrm{supp}}
\newcommand{\trace}{\textrm{Tr}}
\theoremstyle{plain}
\newtheorem{thm}{Theorem}[section]
\newtheorem{introthm}{Theorem}
\newtheorem{introcor}[introthm]{Corollary}
\newtheorem{lem}[thm]{Lemma}
\newtheorem{prop}[thm]{Proposition}
\newtheorem{cor}[thm]{Corollary}
\theoremstyle{definition}
\newtheorem{defi}[thm]{Definition}
\theoremstyle{remark}
\newtheorem{rmk}[thm]{Remark}
\title{The Euler characteristic of configuration spaces}
\author{\uppercase{Louis Hainaut}}
\begin{document}
	
	\maketitle
	
	\abstract{
		In this short note we present a generating function computing the compactly supported Euler characteristic $\chi_c(F(X, n), K^{\boxtimes n})$ of the configuration spaces on a topologically stratified space $X$, with $K$ a constructible complex of sheaves on $X$, and we obtain as a special case a generating function for the Euler characteristic $\chi(F(X, n))$. We also recall how to use existing results to turn our computation of the Euler characteristic into a computation of the equivariant Euler characteristic.
		
		
		
		\vspace{0.5cm}
		
		Subject classification: 55R80
	}
	
	\section{Introduction}
	We consider (ordered) configuration spaces of $n$ distinct points on a topological space $X$, for $n\geq 0$ an integer:
	\[
	F(X, n) = \{(x_1, \ldots, x_n)\in X^n| x_i\neq x_j \mbox{ for } i\neq j\}
	\]
	These spaces admit a free action by the symmetric group $\mathfrak{S}_n$ by permutation of the coordinates. One can also consider the quotient space
	\[
	B(X, n) = F(X, n)/{\mathfrak{S}_n}.
	\]
	In particular, the map $F(X, n) \to B(X, n)$ is a covering map.
	
	We will be interested in computing the Euler characteristic of the spaces $F(X, n)$. In the case of a manifold $M$, the exponential generating function for the Euler characteristic of configuration spaces on $M$ is
	\begin{equation}
		\sum_{n=0}^{\infty}{\chi(F(M, n))\cdot\frac{t^n}{n!}} = \left\{\begin{array}{ll}
			(1+t)^{\chi(M)} & \mbox{if $\dim(M)$ is even}\\
			(1-t)^{-\chi(M)} & \mbox{if $\dim(M)$ is odd}.
		\end{array}\right. \label{formula_manifold}
	\end{equation}
	
	This formula can be found using the fibration
	\[
	M\setminus\{p_1, \ldots, p_{n-1}\} \to F(M, n) \to F(M, n-1).
	\]
	Using multiplicativity of the Euler characteristic on fibrations (and Mayer-Vietoris to compute the characteristic of the fibers) we can prove inductively that
	\[
	\chi(F(M, n)) = \left\{\begin{array}{ll}
		\prod_{i=0}^{n-1}{(\chi(M) - i)} & \mbox{if $\dim(M)$ is even}\\
		\prod_{i=0}^{n-1}{(\chi(M) + i)} & \mbox{if $\dim(M)$ is odd}.
	\end{array}\right.
	\]
	and these coefficients induce the aforementioned exponential generating functions. 
	
	We are therefore primarily interested in the case when $X$ is not a manifold. If $X$ is a finite simplicial complex, the following formula was found by Gal \cite{Gal01}:
	\begin{equation} \label{formula_Gal}
		\sum_{n=0}^{\infty}{\chi(F(X, n)\cdot\frac{t^n}{n!}} = \prod_{\sigma}{(1 + (-1)^{d_{\sigma}}(1-v_{\sigma})t)^{(-1)^{d_{\sigma}}}}
	\end{equation}
	where the product runs over all the cells $\sigma$ of $X$, $d_{\sigma}$ denotes the dimension of $\sigma$ and $v_{\sigma}$ the Euler characteristic of its normal link $L_{\sigma}$. If we use the compactly supported Euler characteristic, Getzler \cite{Get1} found the following formula, which applies to any locally compact Hausdorff space:
	\begin{equation*}
		\sum_{n=0}^{\infty}{\chi_c(F(X, n))\cdot \frac{t^n}{n!}} = (1+t)^{\chi_c(X)}.
	\end{equation*}
	This formula will be further discussed in Lemma \ref{euler_series_trivial_coeff}. We can already notice that it generalizes formula \eqref{formula_manifold}, since for even-dimensional manifolds $M$ we have $\chi_c(M) = \chi(M)$, while for odd-dimensional ones we have $\chi_c(M) = -\chi(M)$.
	
	The formula we present here generalizes \eqref{formula_Gal} to any topologically stratified space of finite type $X = \cup_{\alpha}{X_{\alpha}}$.
	
	\begin{introthm}
		Let $X = \cup_{\alpha}{X_{\alpha}}$ be a topologically stratified space of finite type. For any constructible complex $K$ of sheaves on $X$ we have
		
		\[
			\sum_{n=0}^{\infty}{\chi_c(F(X, n), K^{\boxtimes n})\cdot \frac{t^n}{n!}} = \prod_{\alpha}{(1 + \chi(K|_{X_{\alpha}})\cdot t)^{\chi_c(X_{\alpha})}},
		\]
		with $\chi(K|_{X_{\alpha}})$ denoting the alternating sum of the ranks of the local system $K|_{X_{\alpha}}$.
	\end{introthm}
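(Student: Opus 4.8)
The plan is to reduce the computation to the case of a single stratum, on which $K$ restricts to a local system, and then to apply Getzler's formula (Lemma~\ref{euler_series_trivial_coeff}) after rescaling the variable $t$ by the rank of that local system. The whole argument rests on two formal properties of the compactly supported Euler characteristic with constructible coefficients: its additivity over locally closed decompositions, and its multiplicativity (Künneth formula) under external products.

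First I would stratify the configuration space itself. For each function $f\colon \{1,\dots,n\}\to\{\text{strata}\}$, the locus $F_f = \{(x_1,\dots,x_n)\in F(X,n) : x_i\in X_{f(i)}\}$ is locally closed, being an intersection of preimages of the locally closed strata under the coordinate projections, and these loci partition $F(X,n)$. Two points lying in distinct strata are automatically distinct, so if $n_\alpha = |f^{-1}(\alpha)|$ then $F_f \cong \prod_\alpha F(X_\alpha, n_\alpha)$, and the restriction of $K^{\boxtimes n}$ to $F_f$ is the external product of the sheaves $(K|_{X_\alpha})^{\boxtimes n_\alpha}$. Additivity of $\chi_c$ over this decomposition together with the Künneth formula then yields
\[
\chi_c(F(X,n), K^{\boxtimes n}) = \sum_{\sum_\alpha n_\alpha = n} \frac{n!}{\prod_\alpha n_\alpha!} \prod_\alpha \chi_c\bigl(F(X_\alpha, n_\alpha), (K|_{X_\alpha})^{\boxtimes n_\alpha}\bigr),
\]
where the multinomial coefficient counts the functions $f$ realizing a given tuple $(n_\alpha)_\alpha$. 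Dividing by $n!$ and summing over $n$ turns this sum of products into a product of exponential generating functions, one for each stratum; this is the standard fact that passing to exponential generating functions converts the additive (``disjoint union'') behaviour of $\chi_c$ into multiplication.

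It remains to evaluate the generating function attached to a single stratum $X_\alpha$. Writing $L = K|_{X_\alpha}$, its restriction to the open subset $F(X_\alpha, n)\subseteq X_\alpha^n$ is a complex with locally constant cohomology whose rank (alternating sum of the ranks of the cohomology sheaves) is constant and equal to $\chi(L)^n$. Using that the compactly supported Euler characteristic of such a complex equals its rank times $\chi_c$ of the underlying space, I obtain $\chi_c(F(X_\alpha,n), L^{\boxtimes n}) = \chi(L)^n\cdot \chi_c(F(X_\alpha,n))$. Hence the $\alpha$-th generating function is $\sum_n \chi_c(F(X_\alpha,n))\,(\chi(L)\,t)^n/n!$, which by Getzler's formula (Lemma~\ref{euler_series_trivial_coeff}) applied with the rescaled variable $\chi(L)\,t$ equals $(1+\chi(K|_{X_\alpha})\,t)^{\chi_c(X_\alpha)}$. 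Taking the product over $\alpha$ gives the claimed formula, and the special case $K=\underline{\Q}$ recovers Getzler's formula via additivity of $\chi_c(X) = \sum_\alpha \chi_c(X_\alpha)$.

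The step I expect to require the most care is the first one: verifying that the strata-distribution loci $F_f$ are genuinely locally closed and that $\chi_c$ with sheaf coefficients is additive over them and multiplicative under $\boxtimes$, so that the passage to generating functions is justified; once these motivic properties are in place, the single-stratum computation and the appeal to Getzler's formula are routine. A secondary point to address is that $\chi(K|_{X_\alpha})$ is well defined, i.e.\ that the rank of $L$ is constant on each stratum, which holds because $K$ is constructible with respect to the given stratification, after refining the stratification into connected pieces if necessary.
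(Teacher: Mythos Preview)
Your proposal is correct and follows essentially the same route as the paper: the paper's Proposition~\ref{product_stratification} is precisely your stratification of $F(X,n)$ by point-to-stratum functions together with additivity and K\"unneth to factor the exponential generating function, and the paper's Lemma~\ref{euler_series_local_system} is exactly your single-stratum step (rank times $\chi_c$ of the base, then rescale $t$ in Getzler's formula). The only point the paper makes more explicit is that the identity $\chi_c(X_\alpha,L)=\chi_c(X_\alpha)\chi(L)$ uses the finite-type hypothesis on the strata (Lemma~\ref{euler_characteristic_local_system}), which is where that assumption enters.
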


	\begin{introcor}
		Specializing the previous theorem to the case where $K$ is the dualizing complex of $X$, we obtain
		\begin{equation}\label{new_formula_special_case}
			\sum_{n=0}^{\infty}{\chi(F(X, n)\cdot\frac{t^n}{n!}} = 
			\prod_{\alpha}{(1 + (-1)^{d_{\alpha}}(1 - \chi(L_{\alpha}))\cdot t)^{ (-1)^{d_{\alpha}}\chi(X_{\alpha})}},
		\end{equation}
		 where $d_{\alpha}$ denotes the dimension of the stratum $X_{\alpha}$ and $L_{\alpha}$ its link.
	\end{introcor}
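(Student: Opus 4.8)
The plan is to substitute $K = \DualizingComp$, the dualizing complex of $X$, into the main theorem and then to identify, term by term, the three quantities that appear: the compactly supported Euler characteristic on the left, the stratumwise coefficient $\chi(K|_{X_\alpha})$, and the exponent $\chi_c(X_\alpha)$. Each of these should match the corresponding ingredient of \eqref{new_formula_special_case}, after which the corollary is immediate.

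First I would treat the left-hand side. The essential observation is that the external power of the dualizing complex restricts on the configuration space to the dualizing complex of the configuration space itself. Indeed, the dualizing complex of a product is the external product of the dualizing complexes, so $(\DualizingComp_X)^{\boxtimes n} = \DualizingComp_{X^n}$; since $F(X,n)$ is open in $X^n$ and the dualizing complex is compatible with restriction to open subsets, this gives $(\DualizingComp_X)^{\boxtimes n}|_{F(X,n)} = \DualizingComp_{F(X,n)}$. Verdier duality then yields $\chi_c(F(X,n), \DualizingComp_{F(X,n)}) = \chi(F(X,n))$, because compactly supported hypercohomology of the dualizing complex is dual to ordinary cohomology. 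This rewrites the left-hand side of the theorem as the left-hand side of \eqref{new_formula_special_case}.

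It then remains to evaluate the two invariants attached to each stratum. Since every stratum $X_\alpha$ of a topologically stratified space is a manifold of dimension $d_\alpha$, Poincaré duality gives $\chi_c(X_\alpha) = (-1)^{d_\alpha}\chi(X_\alpha)$, which is exactly the exponent in \eqref{new_formula_special_case}. For the coefficient, $\chi(\DualizingComp|_{X_\alpha})$ is the stalk Euler characteristic of $\DualizingComp$ at a point $x \in X_\alpha$, and I would compute it from the fact that the stalk of the dualizing complex is built from the compactly supported cohomology of small neighborhoods of $x$. Using the conical local structure $U \cong \R^{d_\alpha} \times c L_\alpha$ of a topologically stratified space, this reduces the coefficient to $\chi_c(U) = (-1)^{d_\alpha}\chi_c(c L_\alpha)$, where $c L_\alpha$ is the open cone on the link. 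Additivity of the compactly supported Euler characteristic over the decomposition of $c L_\alpha$ into its cone point and $L_\alpha \times (0,1)$ gives $\chi_c(c L_\alpha) = 1 - \chi(L_\alpha)$, so that $\chi(\DualizingComp|_{X_\alpha}) = (-1)^{d_\alpha}(1 - \chi(L_\alpha))$, matching the coefficient in \eqref{new_formula_special_case}.

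I expect the one genuinely delicate point to be the stalk computation of the dualizing complex: namely, making precise that $\mathcal{H}^i(\DualizingComp)_x \cong \varinjlim_{U \ni x} H^{-i}_c(U)^\vee$ and that this colimit stabilizes along a cofinal family of conical neighborhoods, so that the stalk Euler characteristic is genuinely $\chi_c$ of such a neighborhood. Once the local conical structure is invoked and this identification is in place, the cone computation and the matching of exponents are routine.
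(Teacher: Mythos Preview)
Your proposal is correct and follows essentially the same route as the paper: Verdier duality to convert the left-hand side into ordinary Euler characteristics, the local conical model to compute the stalk Euler characteristic of the dualizing complex as $(-1)^{d_\alpha}(1-\chi(L_\alpha))$, and Poincar\'e duality on each stratum for the exponent. If anything, you are slightly more explicit than the paper in justifying why $(\DualizingComp_X)^{\boxtimes n}|_{F(X,n)}$ is the dualizing complex of $F(X,n)$, which the paper leaves implicit.
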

	
	We remark that in the special case when $X$ is a finite simplicial complex, with the stratification given by its open cells, our formula \eqref{new_formula_special_case} becomes the same as Gal's \eqref{formula_Gal}.
	
	\begin{rmk}
		Many "reasonable" spaces admit a topological stratification. For example every algebraic variety, or more generally every semialgebraic or subanalytic set admits a topological stratification, since they all admit a Whitney stratification.
	\end{rmk}
	
	Although the formula \ref{new_formula_special_case}, to the best of our knowledge, was not known when we started this project, we learned during the final stages of writing this paper that an essentially equivalent result is contained in a paper of Baryshnikov \cite{Barysh}\footnote{The full proof is not contained in the published paper due to page constraints, but can be found in the full version of the paper available from the author's webpage \cite{BarWeb}}.
	
	Nevertheless, the argument presented here may have some independent value: the proof given here seems more direct, and it only uses formal properties of sheaf cohomology and Grothendieck's six functors (whereas Baryshnikov's arguments involve "fattening" the diagonals and a "cubulation" of the space). In particular it is valid also for e.g. $\ell$-adic étale cohomology of arbitrary varieties.
	
	Another difference is that Baryshnikov works with a more general class of "exotic" configuration spaces, in which collisions of particles are allowed according to a certain set of rules. Our arguments could be generalized to cover such spaces, too, although we have not done so here.
	
	\subsection{Conventions} \label{section:setup}
	In the following we will use \emph{topologically stratified spaces of finite type}. We recall the definition of topologically stratified spaces \cite{GM83} and some of their properties.
	
	\begin{defi}
		An $n$-dimensional topologically stratified space is a Hausdorff space $X$ with a filtration
		\[
		\emptyset = X_{-1} \subset X_0 \subset X_1 \subset \ldots \subset X_n = X
		\]
		of $X$ by closed subspaces such that for each $i$ and each $x\in X_i\setminus X_{i-1}$ there exists a neighborhood $U_x\subset X$ of $x$, a compact $(n-i-1)$-dimensional topologically stratified space $L_x$ and a filtration preserving homeomorphism
		\[
		U_x \cong \R^i \times CL_x
		\]
		with $CL_x$ denoting the open cone on $L_x$.
		
		The connected components $X_{\alpha}$ of the set differences $X_i\setminus X_{i-1}$ are called the \emph{strata} of $X$.
	\end{defi}
	
	\begin{rmk}
		From the definition of topologically stratified spaces we can derive following useful properties:
		\begin{itemize}
			\item It is not required that any of the inclusions $X_{i-1}\subset X_i$ are strict. In particular every $n$-dimensional topologically stratified space admits an $(n+1)$-dimensional topological stratification with $X_{n+1} = X_n$.
			\item Since the homeomorphism $U_x\cong \R^i\times CL_x$ is filtration preserving, the strata $X_{\alpha}$ are topological manifolds.
			\item We call the space $L_x$ the \emph{link} of $x$. We can prove that on any stratum $X_{\alpha}$ we can choose the same link for all points $x\in X_{\alpha}$ and we will therefore write this space $L_{\alpha}$.
		\end{itemize}	
	\end{rmk}
	
	We say that a topologically stratified space $X$ is \emph{of finite type} if the collection of strata is finite and every stratum is (homeomorphic to) the interior of a compact manifold with boundary. This latter condition is enough to ensure that the strata have the homotopy type of a finite CW-complex, and in particular that their Betti numbers are finite.
	
	All sheaves considered are sheaves of $\Q$-vector spaces, so in particular all sheaves are flat. We will always assume that the complexes of sheaves are cohomologically bounded and that local systems have finite rank. We will also need the notion of a \emph{constructible} complex of sheaves. A sheaf $\FF$ on a stratified space $X = \cup_{i=1}^m{X_i}$ is called \emph{constructible} if its restriction to each stratum $X_i$ is a local system of finite rank. A complex of sheaves $\FF^{\bullet}$ is called \emph{constructible} if each cohomology sheaf $\mathcal{H}^p(\FF^{\bullet})$ is constructible. 
	
	We will use $\chi_c(X, \FF^{\bullet})$ the compactly supported Euler characteristic with sheaf coefficients. As always, the Euler characteristic is defined as the alternating sum of the ranks of the cohomology groups, so we need to know what is $\mathbb{H}_c^{\bullet}(X, \FF^{\bullet})$, with $\FF^{\bullet}$ being a complex of sheaves.
	
	\begin{defi}
		Let $\FF$ be a sheaf on a space $X$. We define the functor of global sections of $\FF$ with compact support as
		\[
		\Gamma_c(X, \FF) = \{s\in \Gamma(X, \FF)\colon \support(s) \mbox{ is compact}\},
		\]
		the support of a section $s\in \Gamma(U, \mathcal{F})$ being the set
		$\{x\in U : s_x \neq 0\}$.
	\end{defi}
	
	As for the regular sheaf cohomology, we then define the cohomology groups with compact supports $\mathbb{H}_c^{\bullet}(X, \FF^{\bullet})$ to be the higher direct image of the functor $\Gamma_c(X, -)$. When $\FF^{\bullet}$ is the constant sheaf $\underline{\Q}$ concentrated in degree $0$, we may omit the coefficient.
	
	\begin{rmk} $\ $
		\begin{itemize}
			\item When $X$ is compact, then $\Gamma_c(X, -) = \Gamma(X, -)$ and therefore for any complex of sheaves $\FF^{\bullet}$ we have $\mathbb{H}^{\bullet}(X, \FF^{\bullet}) = \mathbb{H}_c^{\bullet}(X, \FF^{\bullet})$. This means in particular that $\chi(X, \FF^{\bullet}) = \chi_c(X, \FF^{\bullet})$.
			\item When $X$ is a manifold of dimension $d$, we have $\chi_c(X) = (-1)^d\chi(X)$, by Poincaré duality.
		\end{itemize}
	\end{rmk}

	\paragraph*{Acknowledgment.}
	I would like to thank my PhD supervisor, Dan Petersen, for his invaluable support during this project.
	I gratefully acknowledge financial support by ERC-2017-STG 759082.
	
	\section{Computing the Euler characteristic}
	
	Although the introduction focused on the Euler characteristic $\chi(X)$, we will work with the Euler characteristic with compact support $\chi_c(X, \FF)$. We explain later how the former is a special case of the latter.
	
	\begin{defi}
		Let $X$ be a topological space and $K$ be a complex of sheaves on $X$. We define the generating series
		\[
		e(X, K; t) = \sum_n{\chi_c(F(X, n), K^{\boxtimes n})\cdot \frac{t^n}{n!}}.
		\]
		
		This expression makes sense if all the compactly supported Euler characteristics on the right hand side exist.
	\end{defi}
	
	We will use the result that the Euler characteristic with compact supports $\chi_c(X, \FF)$ is additive over stratifications. We present here a proof of this result for the reader's convenience.
	
	\begin{lem}\label{compact_euler_additive}
		Let $X = \cup_{i=1}^m{X_{i}}$ be a space with a finite stratification and $K$ be a complex of sheaves on $X$. Then we have
		\[
		\chi_c(X, K) = \sum_{i=1}^{m}{\chi_c(X_i, K)}
		\]
	\end{lem}
	\begin{proof}
		The key result is that whenever $X$ is partitioned into two subsets $U$ and $Z$ with $U$ open and $Z$ closed, then
		\begin{equation}\label{compact_euler_open_close}
			\chi_c(X, K) = \chi_c(U, K) + \chi_c(Z, K).
		\end{equation}
		More information about this identity can be found in \cite[Remark 2.4.5]{Dim04}
		
		We prove Lemma \ref{compact_euler_additive} by induction on the number of strata. This result is true whenever the (finite) stratification satisfies the two conditions that every stratum is locally closed and that the closure of each stratum is a union of strata. These conditions are clearly satisfied in the case of a topological stratification.
		\begin{itemize}
			\item If the stratification has one stratum there is nothing to prove.
			\item If the stratification has two or more strata, by the conditions mentioned above we can assume without loss of generality that the stratum $X_1$ is closed, and let $X'$ be the union of all remaining strata. The identity \eqref{compact_euler_open_close} gives that $\chi_c(X, K) = \chi_c(X_1, K) + \chi_c(X', K)$, and we conclude by employing the induction hypothesis on $X'$.
		\end{itemize}
	\end{proof}
	
	If $L$ is a complex of sheaves whose cohomology sheaves are local systems, we define its Euler characteristic $\chi(L)$ as the alternating sum
	\[
	\chi(L) = \sum_{s\in \Z}{(-1)^s rk(\mathcal{H}^s(L))}.
	\]
	The ranks are well-defined since the cohomology sheaves are local systems.
	
	\begin{rmk}\label{euler_characteristic_multiplicative}
		If $L, L'$ both have cohomology sheaves which are local systems, then $\chi(L\otimes L') = \chi(L)\chi(L')$ by the K{\"u}nneth theorem for complexes of sheaves.
	\end{rmk}
	
	\begin{lem} \label{euler_characteristic_local_system}
		Let $X$ be the interior of a compact manifold with boundary and let $L$ be a complex of sheaves on $X$ whose cohomology sheaves are local systems. Then $\chi_c(X, L) = \chi_c(X)\chi(L)$.
	\end{lem}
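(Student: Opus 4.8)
The plan is to reduce the statement to the case of a genuine local system concentrated in a single degree, and then to exploit the fact that a local system of finite rank is, after tensoring against the constant sheaf, governed entirely by its rank. First I would observe that $\chi_c(X, L)$ depends only on the class of $L$ in the Grothendieck group of constructible complexes: since $\chi_c(X, -)$ is additive on distinguished triangles, and since the hypercohomology spectral sequence $E_2^{p,q} = \mathbb{H}_c^p(X, \mathcal{H}^q(L)) \Rightarrow \mathbb{H}_c^{p+q}(X, L)$ computes the hypercohomology from the cohomology sheaves, taking Euler characteristics yields
\[
\chi_c(X, L) = \sum_{s\in\Z}{(-1)^s \chi_c(X, \mathcal{H}^s(L))}.
\]
This reduces the claim to showing $\chi_c(X, \mathcal{L}) = \chi_c(X)\cdot rk(\mathcal{L})$ for a single local system $\mathcal{L}$ in degree $0$, because then the right-hand side sums to $\chi_c(X)\sum_s{(-1)^s rk(\mathcal{H}^s(L))} = \chi_c(X)\chi(L)$ by the definition of $\chi(L)$.

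For the reduced statement, the key point is that $X$ is connected (or I would argue componentwise) and the interior of a compact manifold with boundary, hence it has the homotopy type of a finite CW-complex with finite Betti numbers, so all the relevant Euler characteristics are finite and well-defined. A local system of rank $r$ on such a space need not be trivial, so I cannot simply assert $\mathcal{L}\cong\underline{\Q}^{\oplus r}$. Instead I would pass to the orientation-twisted picture via Poincaré--Lefschetz duality: for $X$ the interior of a compact oriented $d$-manifold with boundary one has $\mathbb{H}_c^k(X, \mathcal{L}) \cong \mathbb{H}^{d-k}(X, \mathcal{L}^\vee)^\vee$, and ordinary cohomology $\mathbb{H}^\bullet(X, \mathcal{L})$ can be computed from a finite CW-structure with the local coefficient system. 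The point I want to extract is that the Euler characteristic of $\mathbb{H}^\bullet(X, \mathcal{L})$, being an alternating sum over a chain complex whose $k$-th term has rank (number of $k$-cells) times $rk(\mathcal{L})$, equals $\chi(X)\cdot rk(\mathcal{L})$, independently of the monodromy.

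Concretely, the cleanest route is to observe that the cellular chain complex computing $\mathbb{H}^\bullet(X, \mathcal{L})$ has, in each degree $k$, rank equal to $c_k\cdot rk(\mathcal{L})$ where $c_k$ is the number of $k$-cells; taking the alternating sum of ranks gives $\chi(X)\cdot rk(\mathcal{L})$ regardless of the twisting in the differentials, since the twisting only alters the maps and not the dimensions of the terms. Then I would convert this ordinary Euler characteristic to the compactly supported one using $\chi_c(X) = (-1)^d\chi(X)$ for a $d$-manifold (the second remark after the definition of $\mathbb{H}_c^\bullet$) together with the duality identity, which introduces exactly the compensating sign. The main obstacle I anticipate is the bookkeeping around non-orientable $X$ and the boundary: the clean duality statement requires an orientation, so I would either invoke the general form of Poincaré--Lefschetz duality with the orientation local system (absorbing it into $\mathcal{L}$, which does not change ranks) or argue directly that the compactly supported Euler characteristic of a local system of rank $r$ equals $r$ times that of the constant sheaf by the same cellular rank-counting argument applied to a CW-structure computing compactly supported cohomology. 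Either way the numerical conclusion $\chi_c(X, \mathcal{L}) = \chi_c(X)\cdot rk(\mathcal{L})$ follows from the rank-counting being insensitive to monodromy.
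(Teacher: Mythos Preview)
Your proposal is correct and follows essentially the same approach as the paper: reduce to a single local system via the hypercohomology spectral sequence, then use that $X$ has the homotopy type of a finite CW-complex so that the cellular cochain complex with local coefficients has rank $c_k\cdot rk(\mathcal{L})$ in degree $k$, making the Euler characteristic insensitive to monodromy. The only cosmetic differences are that the paper handles the single-local-system case first and the spectral sequence second, and that it simply asserts $\chi_c(X,-)=(-1)^{\dim X}\chi(X,-)$ (citing \cite[Proposition~2.5.4]{Dim04} for the cellular computation) where you unpack the Poincar\'e--Lefschetz duality and the orientation bookkeeping more explicitly.
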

	
	\begin{proof}
		We first consider the special case when $L$ is a single local system in degree $0$. In that case $\chi(L) = rk(L)$ and we can prove the result as follows: Since $\chi_c(X, -) = (-1)^{\dim(X)}\chi(X, -)$, we can work with the usual cohomology. Moreover $X$ is homotopy equivalent to a compact manifold with boundary (its closure), and these spaces have the homotopy type of a finite CW-complex, so $X$ is homotopy equivalent to a finite CW-complex. We can therefore reduce the statement to the case when $X$ is a finite CW-complex, and this case can be proven by using cellular cochains (see also \cite[Proposition 2.5.4]{Dim04}).
		
		For the general case we use the spectral sequence relating sheaf cohomology and hypercohomology to obtain
		\begin{align*}
			\chi_c(X, L) &= \sum_i{(-1)^i rk(\mathbb{H}_c^i(X, L))} \\
			&= \sum_{p, q}{(-1)^{p+q} rk(H_c^p(X, \mathcal{H}^q(L)))} \\
			&= \sum_q {(-1)^q \chi_c(X, \mathcal{H}^q(L))} \\
			&= \chi_c(X) \chi(L).
		\end{align*}	
	\end{proof}
	
	\begin{lem}\label{euler_series_trivial_coeff}
		Let $X$ be a Hausdorff topological space. Then
		\[
		e(X, \underline{\Q}; t) = (1 + t)^{\chi_c(X)}
		\]
	\end{lem}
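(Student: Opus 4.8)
The plan is to avoid computing $\chi_c(F(X,n))$ directly and instead extract it from the full product $X^n$, exploiting the additivity of $\chi_c$ over stratifications (Lemma \ref{compact_euler_additive}) together with its multiplicativity on products. For each set partition $\lambda$ of $\{1,\dots,n\}$ set
\[
F_\lambda = \{(x_1,\dots,x_n)\in X^n : x_i = x_j \iff i,j \text{ lie in the same block of } \lambda\}.
\]
These subsets are pairwise disjoint and cover $X^n$, and choosing the common value of the coordinates on each block defines a homeomorphism $F_\lambda \cong F(X, |\lambda|)$, where $|\lambda|$ is the number of blocks. Grouping partitions by their number of blocks and writing $S(n,k)$ for the number of partitions of $\{1,\dots,n\}$ into $k$ blocks (the Stirling numbers of the second kind), additivity of $\chi_c$ gives $\chi_c(X^n) = \sum_{k=0}^{n} S(n,k)\,\chi_c(F(X,k))$. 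On the other hand the Künneth formula with compact supports gives $\chi_c(X^n) = \chi_c(X)^n$. Writing $a = \chi_c(X)$ and $b_k = \chi_c(F(X,k))$, this produces the identity $a^n = \sum_{k} S(n,k)\, b_k$ for every $n\geq 0$.

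The remaining step is a formal manipulation of exponential generating functions. Using the classical identity $\sum_{n\geq k} S(n,k)\, t^n/n! = (e^t-1)^k/k!$, I would compute
\[
\sum_{n} a^n \frac{t^n}{n!} = \sum_{k} b_k \sum_{n\geq k} S(n,k)\frac{t^n}{n!} = \sum_{k} b_k \frac{(e^t-1)^k}{k!} = e(X; e^t-1).
\]
Since the left-hand side equals $e^{a t}$, substituting $t \mapsto \log(1+t)$ yields $e(X; t) = (1+t)^a = (1+t)^{\chi_c(X)}$, as claimed.

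The step requiring genuine care is the first additivity computation, namely verifying that the collision stratification $\{F_\lambda\}$ satisfies the hypotheses under which Lemma \ref{compact_euler_additive} applies: each $F_\lambda$ must be locally closed and each closure $\overline{F_\lambda}$ a union of strata. Here $\overline{F_\lambda}$ is the closed ``generalized diagonal'' on which $x_i = x_j$ whenever $i,j$ lie in a common block of $\lambda$ (with possibly further coincidences allowed), and this is precisely $\bigsqcup_{\mu} F_\mu$ with $\mu$ ranging over all partitions that $\lambda$ refines; from this description one reads off both that closures are unions of strata and that $\overline{F_\lambda}\setminus F_\lambda$ is again such a diagonal, hence closed, so that $F_\lambda$ is open in its own closure and therefore locally closed in $X^n$. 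The only other external input is the multiplicativity $\chi_c(X^n)=\chi_c(X)^n$, which is Künneth with compact supports, after which the generating-function identity is pure bookkeeping. I should also note, as recorded in the definition of $e(X,K;t)$, that the argument presupposes each $\chi_c(F(X,k))$ to be finite, so that all the manipulations take place over well-defined integers.
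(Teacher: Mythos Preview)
Your argument is essentially the same as the paper's: stratify $X^n$ by collision type, use additivity of $\chi_c$ and the K\"unneth identity $\chi_c(X^n)=\chi_c(X)^n$ to obtain the Stirling relation $\chi_c(X)^n=\sum_k S(n,k)\,\chi_c(F(X,k))$, and then extract the generating function; the only cosmetic difference is that the paper inverts this relation via Stirling numbers of the first kind, whereas you pass through the EGF identity $\sum_n S(n,k)t^n/n!=(e^t-1)^k/k!$ and substitute $t\mapsto\log(1+t)$. One small caution: for a general Hausdorff $X$ the closure $\overline{F_\lambda}$ need not equal the full generalized diagonal $\Delta_\lambda$ (think of isolated points), so your description of $\overline{F_\lambda}$ is not literally correct; however $F_\lambda=\Delta_\lambda\setminus\bigcup_{\mu\text{ strictly coarser}}\Delta_\mu$ is visibly locally closed, and peeling off strata in order of increasing block number still makes the induction in Lemma~\ref{compact_euler_additive} go through, so this does not affect the argument.
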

	
	\begin{proof}
		This formula is already known. We give an argument for the reader's convenience.
		
		The space $X^n$ admits a stratification whose strata are the configuration spaces $F(X, T)$ for $T$ any partition of the set $\{1, 2,\ldots n\}$. The configuration space $F(X, T)$ is defined as the space
		\[
		F(X, T) = \{ (x_1, \ldots, x_n) \in X^n| x_i = x_j \Leftrightarrow i\sim_T j\}
		\]
		or in words: $F(X, T)$ is the subspace of $X^n$ where two coordinates $x_i$ and $x_j$ are equal if and only if $i$ and $j$ belong to the same block of $T$. By Lemma \ref{compact_euler_additive} we obtain
		\[
		\chi_c(X^n) = \sum_{T\in \Pi_n}{\chi_c(F(X, T))}.
		\]
		
		Now on the one hand by multiplicativity we have $\chi_c(X^n) = \chi_c(X)^n$, and on the other hand, if $|T|$ denotes the number of blocks of the partition $T$, we have an identity $\chi_c(F(X, T)) = \chi_c(F(X, |T|))$. Therefore the above equality becomes
		\[
		\chi_c(X)^n = \sum_{k=1}^{n}{S(n, k)\chi_c(F(X, k))}
		\]
		with $S(n, k)$ a Stirling number of the second kind, defined as the number of partitions of $n$ in $k$ parts. Using (signed) Stirling numbers of the first kind $s(k, n)$, we can invert the relation between $\chi_c(X)^n$ and $\chi_c(F(X, k))$ to obtain
		\[
		\chi_c(F(X, k)) = \sum_{n=1}^{k}{s(k, n)\chi_c(X)^n}.
		\]
		
		Finally we use the generating function for the Stirling numbers of the first kind to obtain
		\[
		\sum_{k=0}^{\infty}{\chi_c(F(X, k))\frac{t^k}{k!}} = 1 + \sum_{k=1}^{\infty}{\sum_{n=1}^{k}{s(k, n)\chi_c(X)^n\frac{t^k}{k!}}} = (1 + t)^{\chi_c(X)}.
		\]
	\end{proof}
	
	\begin{lem}\label{euler_series_local_system}
		Let $X$ be the interior of a compact manifold with boundary and let $L$ be a complex of sheaves whose cohomology sheaves are local systems. Then
		\[
		e(X, L; t) = (1 + \chi(L)\cdot t)^{\chi_c(X)}.
		\]
	\end{lem}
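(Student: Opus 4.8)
The plan is to reduce everything to the trivial-coefficient computation of Lemma \ref{euler_series_trivial_coeff} by showing that the only effect of the local system $L$ is to rescale the variable $t$ by the scalar $\chi(L)$. Concretely, I aim to establish the pointwise identity
\[
\chi_c(F(X,k), L^{\boxtimes k}) = \chi(L)^k\cdot\chi_c(F(X,k))
\]
for every $k$, after which substituting into the definition of $e(X,L;t)$ gives
\[
e(X,L;t) = \sum_k \chi_c(F(X,k))\,\frac{(\chi(L)\,t)^k}{k!} = e(X,\underline{\Q};\chi(L)\,t) = (1+\chi(L)\,t)^{\chi_c(X)}
\]
by Lemma \ref{euler_series_trivial_coeff}. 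Thus the entire content of the lemma lies in the single factorization above; the generating-function step is a formal substitution $t\mapsto \chi(L)\,t$.

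To set up that factorization I would first record that the restriction of $L^{\boxtimes k}$ to $F(X,k)$ has cohomology sheaves that are local systems. Indeed, by the Künneth formula for complexes of sheaves the cohomology sheaves of $L^{\boxtimes k}$ on $X^k$ are direct sums of external products of the $\mathcal{H}^s(L)$, hence local systems on $X^k$, and this property is preserved upon restriction to the open subset $F(X,k)$. Iterating Remark \ref{euler_characteristic_multiplicative} moreover gives $\chi(L^{\boxtimes k}) = \chi(L)^k$, so the stalkwise Euler characteristic of the coefficient complex is the constant $\chi(L)^k$. This places us in exactly the situation of Lemma \ref{euler_characteristic_local_system}, provided that $F(X,k)$ plays the role of the base space there.

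The heart of the argument is then to apply Lemma \ref{euler_characteristic_local_system} with $F(X,k)$ in place of $X$. For this I need $F(X,k)$ to be (homeomorphic to) the interior of a compact manifold with boundary. Since $X$ is itself a manifold, $F(X,k)$ is an open submanifold of $X^k$; to see that it is the interior of a compact manifold with boundary I would invoke a compactification of the configuration space of Fulton--MacPherson / Axelrod--Singer type, which realises $F(X,k)$ as the interior of a compact manifold with corners (equivalently, after smoothing the corners, with boundary). Granting this, Lemma \ref{euler_characteristic_local_system} applies and yields $\chi_c(F(X,k), L^{\boxtimes k}) = \chi_c(F(X,k))\,\chi(L^{\boxtimes k}) = \chi(L)^k\,\chi_c(F(X,k))$, as required.

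I expect the main obstacle to be precisely this last geometric point: verifying that $F(X,k)$ remains in the class of spaces for which Lemma \ref{euler_characteristic_local_system} is available. The two properties actually used in that lemma are that the space is a manifold (for Poincaré duality) and that it has the homotopy type of a finite CW complex; the former is immediate, while the latter can alternatively be obtained without any compactification by induction on $k$ along the Fadell--Neuwirth fibration $F(X,k)\to F(X,k-1)$, whose fibers $X\setminus\{k-1\ \text{points}\}$ are again homotopy equivalent to compact manifolds with boundary and hence of finite CW type. Either route supplies the hypotheses needed to run the proof of Lemma \ref{euler_characteristic_local_system} over $F(X,k)$, establishing the factorization; once it is in hand, no part of the generating-function manipulation poses any difficulty.
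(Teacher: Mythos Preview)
Your proposal is correct and follows exactly the same route as the paper: factor $\chi_c(F(X,k),L^{\boxtimes k})=\chi(L)^k\chi_c(F(X,k))$ via Lemma~\ref{euler_characteristic_local_system} and Remark~\ref{euler_characteristic_multiplicative}, then substitute $t\mapsto \chi(L)t$ and invoke Lemma~\ref{euler_series_trivial_coeff}. You have in fact been more careful than the paper, which applies Lemma~\ref{euler_characteristic_local_system} to $F(X,k)$ without comment; your discussion of why $F(X,k)$ is again the interior of a compact manifold with boundary (via Fulton--MacPherson compactification or, alternatively, that it is a manifold of finite CW type via Fadell--Neuwirth) fills a gap the paper leaves implicit.
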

	
	\begin{proof}
		We first notice that $\chi_c(F(X, n), L^{\boxtimes n}) = \chi_c(F(X, n))\cdot \chi(L)^n$ due to Lemma \ref{euler_characteristic_local_system} and Remark \ref{euler_characteristic_multiplicative}.
		Then using Lemma \ref{euler_series_trivial_coeff} we obtain
		\[
		e(X, L; t) = e(X, \underline{\Q}; \chi(L)\cdot t) = (1 + \chi(L)\cdot t)^{\chi_c(X)}.
		\]
	\end{proof}
	
	\begin{prop}\label{product_stratification}
		Let $X = \bigcup_{j=1}^m{X_j}$ be a stratified space and $K$ be a complex of sheaves on $X$. Then
		\[
		e(X, K; t) = \prod_{j=1}^{m}{e(X_j, K|_{X_j}; t)}.
		\]
	\end{prop}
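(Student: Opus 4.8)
The plan is to refine the additivity of the compactly supported Euler characteristic (Lemma \ref{compact_euler_additive}) into the asserted product of exponential generating series, by stratifying each configuration space $F(X,n)$ according to how its $n$ labelled points are distributed among the strata $X_1,\ldots,X_m$ of $X$.

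First I would fix $n$ and, for each function $f\colon\{1,\ldots,n\}\to\{1,\ldots,m\}$, introduce the subspace
\[
F(X,n)_f=\{(x_1,\ldots,x_n)\in F(X,n): x_i\in X_{f(i)}\text{ for all }i\}
\]
of configurations in which the $i$-th point lies in the stratum $X_{f(i)}$. Since the coordinates of a point of $F(X,n)$ are pairwise distinct and the strata are disjoint, the collision conditions only constrain coordinates landing in a common stratum; thus, writing $S_j=f^{-1}(j)$, there is a homeomorphism
\[
F(X,n)_f\cong\prod_{j=1}^m F(X_j,S_j)\cong\prod_{j=1}^m F(X_j,|S_j|).
\]
These pieces are exactly the intersections of $F(X,n)$ with the product stratification of $X^n$ whose strata are $\prod_{i=1}^n X_{f(i)}$. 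As $F(X,n)$ is open in $X^n$ and the product stratification is a finite stratification satisfying the hypotheses of Lemma \ref{compact_euler_additive}, the induced finite partition $\{F(X,n)_f\}_f$ of $F(X,n)$ does so as well. Verifying that each $F(X,n)_f$ is locally closed and that its closure is a union of such pieces is the one point that requires care, and I expect it to be the main (though modest) obstacle.

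Next I would apply Lemma \ref{compact_euler_additive} to this partition. Under the homeomorphism above the external power $K^{\boxtimes n}$ restricts on $F(X,n)_f$ to the external product $\boxtimes_{j}(K|_{X_j})^{\boxtimes|S_j|}$, so multiplicativity of the compactly supported Euler characteristic (the K\"unneth theorem, as in Remark \ref{euler_characteristic_multiplicative}) yields
\[
\chi_c(F(X,n),K^{\boxtimes n})=\sum_{f}\prod_{j=1}^m\chi_c\bigl(F(X_j,|S_j|),(K|_{X_j})^{\boxtimes|S_j|}\bigr).
\]

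Finally I would reorganize the sum over $f$ by the composition $(n_1,\ldots,n_m)$ it induces, where $n_j=|f^{-1}(j)|$ and $\sum_j n_j=n$. Exactly $\binom{n}{n_1,\ldots,n_m}$ functions $f$ induce a given composition, so after dividing by $n!$ we obtain
\[
\frac{\chi_c(F(X,n),K^{\boxtimes n})}{n!}=\sum_{n_1+\cdots+n_m=n}\ \prod_{j=1}^m\frac{\chi_c\bigl(F(X_j,n_j),(K|_{X_j})^{\boxtimes n_j}\bigr)}{n_j!}.
\]
The right-hand side is precisely the coefficient of $t^n$ in the product $\prod_{j=1}^m e(X_j,K|_{X_j};t)$ of exponential generating series, and summing over $n$ gives the claimed identity. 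Beyond the standard bookkeeping for products of exponential generating functions, I anticipate no difficulty in this last step.
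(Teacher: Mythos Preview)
Your proposal is correct and follows essentially the same argument as the paper: stratify $F(X,n)$ by how the $n$ labelled points fall into the strata $X_j$, identify each piece as a product $\prod_j F(X_j,n_j)$ on which $K^{\boxtimes n}$ restricts to $\boxtimes_j (K|_{X_j})^{\boxtimes n_j}$, apply additivity and K\"unneth, and then recognise the multinomial bookkeeping as the product of exponential generating functions. The only cosmetic difference is that you index the pieces by functions $f\colon\{1,\ldots,n\}\to\{1,\ldots,m\}$ and later group them by compositions, whereas the paper goes directly to compositions and inserts the multinomial factor; these are the same count.
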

	
	\begin{proof}
		The stratification $(X_j)$ induces a stratification of $F(X, n)$ with strata of the form
		\[
		\prod_{j=1}^{m}{F(X_j, n_j)}
		\]
		such that $n_1 + \ldots + n_m = n$ and $n_j\geq 0$ for each $j$. Moreover, the restriction of $K^{\boxtimes n}$ to each such stratum is
		\[
		\boxtimes_{j=1}^m{(K|_{X_j})^{\boxtimes n_j}}.
		\]
		
		Once these two observations have been made, we can first compute for a fixed $n$
		\begin{align*}
			&\chi_c(F(X, n), K^{\boxtimes n})\cdot \frac{t^n}{n!} \\
			= &\sum_{n_1 +\ldots + n_m = n}{\binom{n}{n_1, \ldots, n_m}\chi_c\left(\prod_{j=1}^m{F(X_j, n_j), \boxtimes_{j=1}^m{(K|_{X_j})^{\boxtimes n_j}}}\right)\cdot \frac{t^n}{n!}} \\
			= &\sum_{n_1 +\ldots + n_m = n} \prod_{j=1}^m{\chi_c(F(X_j, n_j), (K|_{X_j}})^{\boxtimes n_j})\cdot \frac{t^{n_j}}{n_j!}
		\end{align*} 
		and thus we obtain for the exponential generating function
		\begin{align*}
			e(X, K; t) &= \sum_n{\chi_c(F(X, n), K^{\boxtimes n})\cdot \frac{t^n}{n!}} \\
			&= \prod_{j=1}^m{\sum_n{\chi_c(F(X_j, n), (K|_{X_j})^{\boxtimes n})\cdot \frac{t^n}{n!}}}\\
			&= \prod_{j=1}^m{e(X_j, K|_{X_j}; t)}.
		\end{align*}
	\end{proof}
	
	We are now ready to state the main result:
	
	\begin{thm} \label{formula_euler_characteristic}
		Let $X = \cup_{i=1}^m{X_i}$ be a topologically stratified space of finite type and $K$ be a constructible complex of sheaves on $X$. Then
		\[
		e(X, K; t) = \prod_{i=1}^m{(1 + \chi(K|_{X_i})\cdot t)^{\chi_c(X_i)}}
		\]
	\end{thm}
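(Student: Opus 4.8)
The plan is to assemble the theorem directly from the two structural results already established, reducing everything to a computation on a single stratum. First I would invoke Proposition~\ref{product_stratification} applied to the given topological stratification $X = \cup_{i=1}^m X_i$, which factors the full generating series as a product over the strata:
\[
e(X, K; t) = \prod_{i=1}^m e(X_i, K|_{X_i}; t).
\]
The hypotheses of that proposition are satisfied because a topological stratification is in particular a finite stratification into locally closed pieces whose closures are unions of strata, exactly the conditions used in the proof of Lemma~\ref{compact_euler_additive}.

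The second step is to evaluate each factor $e(X_i, K|_{X_i}; t)$ by means of Lemma~\ref{euler_series_local_system}, which requires checking its two hypotheses stratum by stratum. For the first hypothesis, I would use that $X$ is \emph{of finite type}: by definition this means every stratum $X_i$ is homeomorphic to the interior of a compact manifold with boundary, which is precisely what the lemma demands (and it also guarantees finite Betti numbers, so that each $\chi_c$ appearing in the series is well defined). For the second hypothesis, I would use that $K$ is \emph{constructible}: each cohomology sheaf $\mathcal{H}^p(K)$ restricts to a local system on every stratum, so $K|_{X_i}$ is a complex of sheaves whose cohomology sheaves are local systems, and in particular $\chi(K|_{X_i})$ is defined. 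With both conditions in hand, Lemma~\ref{euler_series_local_system} yields
\[
e(X_i, K|_{X_i}; t) = (1 + \chi(K|_{X_i})\cdot t)^{\chi_c(X_i)}.
\]

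Substituting this expression into the product from the first step gives the claimed formula immediately. The computational weight of the argument has thus already been discharged in the preceding lemmas, so the only genuine content remaining is the transfer of hypotheses to each stratum. I expect the main obstacle to be precisely this verification: confirming that the global assumption "of finite type" localizes to the manifold-interior hypothesis on each $X_i$, and that the global assumption "constructible" localizes to the local-system hypothesis on $K|_{X_i}$. Once these two implications are spelled out, the evaluation of the individual factors via Lemma~\ref{euler_series_local_system} and the multiplicativity supplied by Proposition~\ref{product_stratification} combine to finish the proof.
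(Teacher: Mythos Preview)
Your proposal is correct and follows essentially the same approach as the paper: apply Proposition~\ref{product_stratification} to factor $e(X,K;t)$ over the strata, then use Lemma~\ref{euler_series_local_system} on each factor. If anything, you are more explicit than the paper in spelling out why the finite-type and constructibility hypotheses localize to the assumptions needed on each stratum.
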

	
	\begin{proof}
		From Proposition \ref{product_stratification}, we obtain that
		\[
		e(X, K; t) = \prod_{i=1}^m{e(X_i, K|_{X_i}; t)}.
		\]
		
		Then, for each stratum $X_i$ we apply Lemma \ref{euler_series_local_system} to obtain
		\[
		\prod_{i=1}^m{e(X_i, K|_{X_i}; t)} = \prod_{i=1}^m{(1 + \chi(K|_{X_i})\cdot t)^{\chi_c(X_i)}}.
		\]
	\end{proof}
	
	We now explain how to recover the Euler characteristics $\chi(F(X, n))$ from previous theorem.
	
	We define the dualizing complex $\DualizingComp$ of $X$ as the sheafification of the presheaf of cochain complexes
	\[
	U \mapsto C_c^{\bullet}(U, \Q)^{\vee}
	\]
	sending $U$ to the dual of the complex of cochains with compact support on $U$. Using \cite[Proposition V.2.4]{Ive86}, we see that this definition corresponds to the more usual one with the exceptional inverse image functor. A standard result of Verdier duality is that $\mathbb{H}_c^{\bullet}(X, \DualizingComp) = \mathbb{H}^{-\bullet}(X, \underline{\Q})^{\vee}$ (see \cite[Theorem 3.3.10]{Dim04}, using the fact that $D(\DualizingComp)\cong \underline{\Q}$). This means in particular that $\chi_c(X, \DualizingComp) = \chi(X)$. We also notice that for a topologically stratified space $X$, the dualizing complex has constructible cohomology since for any stratum $X_{\alpha}$ of dimension $i$ and any point $x\in X_{\alpha}$, $x$ admits a basis of neighborhoods all homeomorphic to $\R^i\times CL_{\alpha}$. Finally we easily see from the definition that $\DualizingComp$ is cohomologically bounded.
	
	The remarks made in the previous paragraph actually allow us to prove that for any stratum $X_{\alpha}$ and any point $x\in X_{\alpha}$ we have
	\begin{equation*}
		\mathcal{H}^{-p}(\DualizingComp|_{X_{\alpha}})|_x \cong H_c^p(\R^i\times CL_{\alpha})^{\vee}
	\end{equation*}
	and therefore 
	\begin{align*}
		\chi(\DualizingComp|_{X_{\alpha}}) &= \chi_c(\R^i\times CL_{\alpha}) \\
		&= (-1)^i \chi_c(CL_{\alpha}) \\
		&= (-1)^i (1 - \chi(L_{\alpha})).
	\end{align*}
	
	\begin{cor}\label{formula_usual_euler_characteristic}
		Specializing the Theorem \ref{formula_euler_characteristic} with $K = \DualizingComp$ gives the formula for the Euler characteristic
		\begin{align*}
			\sum_n{\chi(F(X, n))\cdot \frac{t^n}{n!}} &= \prod_{\alpha}{(1 + \chi(\DualizingComp|_{X_{\alpha}})\cdot t)^{\chi_c(X_{\alpha})}}\\
			&= \prod_{\alpha}{(1 + (-1)^{d_{\alpha}}(1 - \chi(L_{\alpha}))\cdot t)^{(-1)^{d_{\alpha}}\chi(X_{\alpha})}}.
		\end{align*}
	\end{cor}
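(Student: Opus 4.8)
The plan is to combine Theorem \ref{formula_euler_characteristic}, applied to the specific coefficient complex $K = \DualizingComp$, with two identifications: first, that the left-hand side of the corollary genuinely equals the generating series $e(X, \DualizingComp; t)$, and second, that each factor appearing on the right-hand side can be rewritten in the claimed closed form using the computation of $\chi(\DualizingComp|_{X_{\alpha}})$ carried out in the paragraph immediately preceding the corollary, together with Poincaré duality on the individual strata.

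First I would establish the coefficientwise identity $\chi_c(F(X, n), (\DualizingComp)^{\boxtimes n}) = \chi(F(X, n))$ for every $n$, which is what makes $e(X, \DualizingComp; t)$ equal to $\sum_n \chi(F(X, n))\cdot t^n/n!$. The key input is the standard compatibility of the dualizing complex with external products, giving $(\DualizingComp_X)^{\boxtimes n} \cong \DualizingComp_{X^n}$, combined with the fact that the dualizing complex of a space restricts to the dualizing complex of any open subset. Since $F(X, n)$ is open in $X^n$, these two facts yield $(\DualizingComp_X)^{\boxtimes n}|_{F(X, n)} \cong \DualizingComp_{F(X, n)}$. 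The Verdier duality identity recalled above, $\chi_c(Y, \DualizingComp_Y) = \chi(Y)$, applied to $Y = F(X, n)$, then produces the desired equality of coefficients.

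Next I would invoke Theorem \ref{formula_euler_characteristic} directly with $K = \DualizingComp$ to obtain the first displayed equality,
\[
e(X, \DualizingComp; t) = \prod_{\alpha}{(1 + \chi(\DualizingComp|_{X_{\alpha}})\cdot t)^{\chi_c(X_{\alpha})}}.
\]
To reach the second form I would substitute the value $\chi(\DualizingComp|_{X_{\alpha}}) = (-1)^{d_{\alpha}}(1 - \chi(L_{\alpha}))$ already computed before the corollary, and rewrite the exponent via Poincaré duality: each stratum $X_{\alpha}$ is a manifold of dimension $d_{\alpha}$, so $\chi_c(X_{\alpha}) = (-1)^{d_{\alpha}}\chi(X_{\alpha})$. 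This turns each factor into $(1 + (-1)^{d_{\alpha}}(1 - \chi(L_{\alpha}))\cdot t)^{(-1)^{d_{\alpha}}\chi(X_{\alpha})}$, which is the asserted second expression.

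The main obstacle is the first identification: one must verify that forming the $n$-th external power of $\DualizingComp$ and restricting to the open configuration locus is compatible with the formation of dualizing complexes, so that Verdier duality may be applied on each $F(X, n)$ rather than only on $X$ itself. Everything else is either a direct appeal to the theorem or a formal substitution of identities that have already been recorded in the excerpt.
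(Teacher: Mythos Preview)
Your proposal is correct and follows the same route as the paper: apply Theorem \ref{formula_euler_characteristic} with $K=\DualizingComp$, identify the left-hand side via Verdier duality, and rewrite each factor using the computation $\chi(\DualizingComp|_{X_\alpha})=(-1)^{d_\alpha}(1-\chi(L_\alpha))$ together with $\chi_c(X_\alpha)=(-1)^{d_\alpha}\chi(X_\alpha)$. If anything, you are more careful than the paper, which records only $\chi_c(X,\DualizingComp)=\chi(X)$ and leaves implicit the step $(\DualizingComp_X)^{\boxtimes n}|_{F(X,n)}\cong \DualizingComp_{F(X,n)}$ that you correctly flag as the point requiring justification.
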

	
	\subsection{The equivariant Euler characteristic}
	Let us remark that knowing the Euler characteristic $\chi(F(X, n))$ also determines $\chi(B(X, n))$ as well as $\EquivEulerChar(F(X, n))\in R(\mathfrak{S}_n)$. The content of this short section is probably well-known to the experts, but we decided to include it for completeness. The main observation that we will use here is that for any positive integer $n$, the space $F(X, n)$ admits a free action of the symmetric group $\mathfrak{S}_n$ by permutation of the coordinates.
	
	\begin{prop}
		Let $X$ be a topologically stratified space. Then
		\[
		\chi(B(X, n)) = \chi(F(X, n))/n!
		\]
		and
		\[
		\EquivEulerChar(F(X, n)) = \chi(B(X, n)) \cdot \Q[\mathfrak{S}_n].
		\]
		
		In particular, the equivariant Euler characteristic of $F(X, n)$ is a multiple of the regular representation.
	\end{prop}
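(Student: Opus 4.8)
The plan is to leverage the one structural fact singled out in the preceding text: $\mathfrak{S}_n$ acts \emph{freely} on $F(X,n)$, with quotient $B(X,n)$, so that $p\colon F(X,n)\to B(X,n)$ is an $n!$-sheeted covering. Both asserted identities will drop out of a single computation of the equivariant Euler characteristic using a cell structure adapted to this free action, so I would organise the argument around producing such a structure.

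First I would fix a finite CW structure on $B(X,n)$ and pull it back along $p$. Because $p$ is a covering, this equips $F(X,n)$ with a finite CW structure on which $\mathfrak{S}_n$ acts cellularly and freely, each $i$-cell of $B(X,n)$ lifting to a single free orbit of $n!$ cells. Hence the cellular cochain groups $C^i(F(X,n);\Q)$ are free $\Q[\mathfrak{S}_n]$-modules, of rank equal to the number of $i$-cells of $B(X,n)$.

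Then I would compute directly in the representation ring. Since $R(\mathfrak{S}_n)$ is the Grothendieck group of finite-dimensional $\mathfrak{S}_n$-representations and the Euler characteristic of a bounded complex agrees whether computed on the complex or on its cohomology, we have
\[
\EquivEulerChar(F(X,n)) \;=\; \sum_i (-1)^i [H^i(F(X,n);\Q)] \;=\; \sum_i (-1)^i [C^i(F(X,n);\Q)].
\]
As each $C^i$ is free, this equals $\bigl(\sum_i (-1)^i \#\{i\text{-cells of }B(X,n)\}\bigr)\cdot[\Q[\mathfrak{S}_n]] = \chi(B(X,n))\cdot[\Q[\mathfrak{S}_n]]$, which is the second identity; the final sentence is then immediate, with $\chi(B(X,n))$ the multiplicity of the regular representation. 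Passing to virtual dimensions (equivalently, evaluating the underlying characters at the identity) turns this into $\chi(F(X,n)) = n!\,\chi(B(X,n))$, i.e.\ the first identity — which can equally be seen as the multiplicativity of $\chi$ under the $n!$-sheeted cover $p$.

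The main obstacle is the homotopy-theoretic input underlying the very first step: one must know that $B(X,n)$ (equivalently $F(X,n)$) has finite-dimensional rational cohomology and admits a finite CW structure realising the free $\mathfrak{S}_n$-action, for $X$ of finite type. I would justify this from the finiteness already in force in the paper — finitely many strata, each with finite Betti numbers, with $F(X,n)$ an open union of strata in the product stratification of $X^n$. If one prefers to sidestep explicit cell structures, the identical conclusion follows from the Lefschetz trace formula: the character of $\EquivEulerChar(F(X,n))$ at $g\in\mathfrak{S}_n$ is the Lefschetz number $L(g)=\chi(F(X,n)^g)$, which vanishes for $g\neq e$ by freeness and equals $\chi(F(X,n))$ for $g=e$, matching precisely the character $g\mapsto \chi(B(X,n))\cdot\chi_{\mathrm{reg}}(g)$ of the claimed answer.
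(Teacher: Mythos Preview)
Your proposal is correct, and in fact you give two arguments. Your \emph{alternative} argument via the Lefschetz trace formula is exactly what the paper sketches: $\trace(g)=0$ for $g\neq e$ by freeness of the action, and then character theory (comparing with the character of the regular representation, which is $n!$ at $e$ and $0$ elsewhere) yields both identities at once.

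Your \emph{primary} argument is a genuinely different route. Rather than invoking a fixed-point theorem, you work at the chain level: pulling back a finite CW structure on $B(X,n)$ makes each $C^i(F(X,n);\Q)$ a free $\Q[\mathfrak{S}_n]$-module, so the identity $\sum_i(-1)^i[H^i]=\sum_i(-1)^i[C^i]$ in $R(\mathfrak{S}_n)$ immediately gives $\EquivEulerChar(F(X,n))=\chi(B(X,n))\cdot[\Q[\mathfrak{S}_n]]$, and taking dimensions recovers the first identity. This is more elementary in that it bypasses Lefschetz entirely, but it trades that for the need to actually produce an equivariant finite CW model, which (as you rightly flag) requires some care for a general topologically stratified space of finite type. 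The paper's Lefschetz route needs only the finiteness of the Betti numbers of $F(X,n)$, which is a milder input already implicit in the standing hypotheses, at the cost of citing a deeper theorem.
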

	
	This result is a special case of a more general result about the Euler characteristic of spaces with a free action of a finite group, which seems to have been first written down by Zarelua \cite[Theorem 1]{Zar68}. The main ingredient of the proof is the Lefschetz trace formula, which implies that a trace $\trace(g) = \sum_i{(-1)^i \trace(g| H^i(F(X, n)))}$ is $0$ for all $g\in \mathfrak{S}_n$, $g\neq e$. The rest of the proof uses standard results of character theory on the double sum $$\sum_i{\sum_{g\in \mathfrak{S}_n}{(-1)^i \trace(g|H^i(F(X, n)))}}.$$
	
	\subsection{Example}
	We finish by showing how to use our formula for a space to which the previously known formulas do not apply. Note that in the proof we assumed that the strata where connected, but in computations we can allow non-connected strata as long as the link $L_x$ is the same for each point of the stratum. This doesn't change the result because of Lemma \ref{compact_euler_additive}.
	
	Consider the subspace $X\subset \R^3$ formed by two planes $\Pi_1, \Pi_2$ intersecting on a line $l$. The space $X$ is obviously not a manifold, and it also cannot be represented by a finite simplicial complex since it is non-compact, so Gal's formula does not apply.
	
	We can take the stratification $X = X_1\cup X_2$, with $X_1$ being the disjoint union of the four half-planes, and $X_2$ being the intersection line. In this situation, the link $L_1$ is the empty space, while the link $L_2$ is a discrete space with four points. We therefore obtain
	\[
	e(X, \DualizingComp; t) = (1+t)^4\cdot (1 + (-1)(1-4)\cdot t)^{-1} = (1+t)^4\cdot (1 + 3t)^{-1}.
	\]
	
\printbibliography

\end{document}